\documentclass[final]{amsart}
\usepackage{amssymb}
\usepackage{hhline}
\usepackage{graphicx}
\usepackage{stmaryrd}
\usepackage{float}
\usepackage{cite}
\usepackage{hyperref}

\usepackage[utf8]{inputenc}
\usepackage{listings}
\usepackage{xcolor}

\newtheorem{theorem}{Theorem}[section]

\newtheorem{proposition}[theorem]{Proposition}
\newtheorem{corollary}[theorem]{Corollary}

\theoremstyle{definition}

\newcommand{\Aut}{\mathrm{Aut\mkern 2mu}}
\newcommand{\M}{\mathrm{M\mkern 1mu}}
\newcommand{\id}{\mathrm{id\mkern 1mu}}

\title{On certain classes of commutative $g$-dimonoids}

\author{Volodymyr Gavrylkiv, Inna Hlushak, Kateryna Koporkh, Natalia Mazurenko, and Oksana Mykytsey}
\address[V.~Gavrylkiv]{Vasyl Stefanyk Carpathian National University, Ivano-Frankivsk, Ukraine} \email{vgavrylkiv@gmail.com}
\address[I.~Hlushak]{Vasyl Stefanyk Carpathian National University, Ivano-Frankivsk, Ukraine} \email{inna.hlushak@cnu.edu.ua}
\address[K.~Koporkh]{Vasyl Stefanyk Carpathian National University, Ivano-Frankivsk, Ukraine} \email{kateryna.koporkh@cnu.edu.ua}
\address[N.~Mazurenko]{Vasyl Stefanyk Carpathian National University, Ivano-Frankivsk, Ukraine} \email{nataliia.mazurenko@cnu.edu.ua}
\address[O.~Mykytsey]{Vasyl Stefanyk Carpathian National University, Ivano-Frankivsk, Ukraine} \email{oksana.mykytsei@cnu.edu.ua}

\subjclass{Pri 20M75, Sec  20M10, 20M14, 20M15, 20B25, 05A17}
\keywords{semigroup, inflation of a semigroup, $g$-dimonoid, commutative $g$-dimonoid}

\begin{document}

\begin{abstract}
We study $g$-dimonoids, which are algebraic structures equipped with two associative binary operations satisfying a prescribed system of axioms. We develop a construction of $g$-dimonoids based on inflations of semigroups, construct several classes of $2$-dinilpotent commutative $g$-dimonoids, and investigate the isomorphism problem for these classes. As an application of these results, we obtain a structural description of all commutative $g$-dimonoids of order $3$, up to isomorphism.
\end{abstract}

\maketitle

\section*{Introduction}

Jean-Louis Loday introduced the concept of a dialgebra in~\cite{Lod} while seeking a class of (linear) algebras related to Leibniz algebras in the same way that associative algebras are related to Lie algebras. Recall that a Leibniz algebra is a linear algebra over a field whose bracket operation $[, ]$ satisfies the Leibniz identity
$$[[x, y], z] = [[x, z], y] + [x, [y, z]],$$
but is not required to be anticommutative.

Loday's approach was to distinguish the left and right multiplications by introducing two associative binary operations, denoted by $\dashv$ and $\vdash$. 
He observed that if these operations satisfy the following three axioms:
\begin{align*}
(x \dashv y) \dashv z &= x \dashv (y \vdash z), \hspace{10mm}(D_1) \\
(x \vdash y) \dashv z &= x \vdash (y \dashv z), \hspace{10mm}(D_2)\\\
(x \dashv y) \vdash z &= x \vdash (y \vdash z), \hspace{10mm}(D_3)
\end{align*}
then the bracket defined by $[x, y] = x \dashv y - y \vdash x$ satisfies the Leibniz identity.

Accordingly, Loday defined a {\em dimonoid}~\cite{Lod} as an algebraic structure $(D,\dashv,\vdash)$ consisting of a set $D$ equipped with two associative binary operations $\dashv$ and $\vdash$ satisfying axioms $(D_1)$, $(D_2)$, and $(D_3)$.  Since dialgebras are the linear analogues of dimonoids, many results on dimonoids have direct applications in the theory of dialgebras~\cite{B,F,Lod,M,ZCY}. Dimonoid theory was systematically developed in a long series of works of A.V.~Zhuchok beginning in 2008 (see, e.g., \cite{Zh2008,Zh2011AEJM,Zh2011AL}, among others) and received a significant impetus after the publication of his two monographs in 2011 \cite{Zh2011}, and 2014 \cite{Zh2014}.

T.~Pirashvili~\cite{P} introduced the concept of a duplex, a generalization of dimonoids, and constructed the free duplex.
The properties of free dimonoids were employed in~\cite{Lod} to characterize free dialgebras and to study their cohomologies. In~\cite{Liu, ZhG2017}, the notion of a dimonoid was used to define and investigate (one-sided) dirings. Furthermore, dimonoids are closely related to restrictive bisemigroups, considered by Schein~\cite{Sh}, and doppelsemigroups, introduced by A.V. Zhuchok (see, e.g., \cite{GDS2, Zh2017AU, ZhK2018}).

In~\cite{GUDS, Gdim1, Gdim2, GSDS, GLDS, Gdim3, GDSso}, several classes of dimonoids, doppelsemigroups, and extensions of doppelsemigroups were constructed and studied. Among numerous results, these works provide isomorphism criteria for the corresponding classes, structural classifications of objects of small orders, descriptions of their automorphism groups, and enumerations of pairwise nonisomorphic structures of small orders. The methods developed for studying isomorphisms in these classes are also employed in the study of the structures considered in this paper.

In~\cite{MDS}, the notion of a {\em generalized dimonoid} (or {\em $g$-dimonoid}) was introduced as an algebraic structure $(D, \dashv, \vdash)$ consisting of a set $D$ endowed with two associative binary operations $\dashv$ and $\vdash$ that satisfy the axioms $(D_1)$ and $(D_3)$. The construction of the free $g$-dimonoid was also described therein. Every semigroup $(D, \dashv)$ can naturally be regarded as a $g$-dimonoid $(D, \dashv, \dashv)$, referred to as the {\em trivial $g$-dimonoid}. In this sense, $g$-dimonoids constitute a  generalization of semigroups.

An {\em associative $0$-dialgebra}~\cite{Po}, that is, a linear space over a field equipped with two associative binary operations $\dashv$ and $\vdash$ satisfying the same axioms $(D_1)$ and $(D_3)$, can be viewed as a linear analogue of a $g$-dimonoid. Given the central role of dimonoids and $g$-dimonoids in the study of Leibniz algebras, dialgebras, and associative 0-dialgebras, their investigation by semigroup-theoretic methods constitutes a natural and promising line of research.

In~\cite{ZhYulia1}, a free $n$-nilpotent $g$-dimonoid was constructed, the least $n$-nilpotent congruence on a free $g$-dimonoid was examined, and a characterization of a free $g$-dimonoid was obtained. The construction of the free commutative $g$-dimonoid together with the description of the least commutative congruence on a free $g$-dimonoid was provided in~\cite{ZhYulia2}. Furthermore, Yu.~Zhuchok~\cite{ZhYurii_gdm2016} determined all isomorphisms between the endomorphism semigroups of free commutative $g$-dimonoids and proved that every automorphism of such an endomorphism semigroup is quasi-inner. In~\cite{MY}, Cayley-type theorems for $g$-dimonoids were established using left and right actions of sets and the concept of a dialgebra. More recently, A.~Zhuchok~\cite{ZhAEJM2022} described the least dimonoid congruence and the least semigroup congruence on the free (commutative, $n$-nilpotent) $g$-dimonoid. This year, the determinability problem for free $g$-dimonoids was solved in~\cite{Zh2026}.

In~\cite{Ggdim1}, the duality and isomorphisms of $g$-dimonoids are studied. Several new classes of $g$-dimonoids are introduced, their automorphism groups and halos are determined, and a complete classification, up to isomorphism, of all two-element $g$-dimonoids is obtained. In addition, all rectangular commutative $g$-dimonoids are completely characterized. Examples of commutative iso-dual nonabelian $g$-dimonoids and of noncommutative nonabelian rectangular $g$-dimonoids that are not dimonoids are also constructed. Finally, the numbers of all pairwise nonisomorphic $g$-dimonoids of orders up to 5, and of all pairwise nonisomorphic commutative, abelian, and rectangular $g$-dimonoids of orders up to 6, are determined by computer-assisted calculations.

This paper is organized around three main results. First, we introduce a construction of $g$-dimonoids based on inflations of semigroups. Next, we establish several classes of $2$-dinilpotent commutative $g$-dimo\-noids, derive isomorphism criteria for the $g$-dimonoids in these classes, and determine which members of these classes are dimonoids. Finally, these results are applied to obtain a structural description of all commutative $g$-dimonoids of order $3$, up to isomorphism.

\section{Preliminaries}

First, we recall several definitions and constructions concerning semigroups that will be used in the sequel.

\smallskip

An element $z$ of a semigroup $S$ is called  {\em a left zero} (resp.  {\em a right zero}) in $S$ if $za=z$ (resp. $az=z$) for any $a\in S$. An element $0$  is called  {\em a  zero} if $0$ is a left zero and a right zero.

A semigroup $S$ is called a {\em null semigroup}\cite{Howie} if there exists an element $0\in S$ such that $xy=0$ for all $x,y\in S$. In this case  $0$ is a zero of $S$.  By $O_{S^0}$ we denote a null semigroup with zero $0$ on a set $S$.  The null semigroups $O_{S^0}$ and $O_{T^z}$ are isomorphic if and only if $|S|=|T|$. If $S$ is a  set of cardinality $|S|=\kappa$, we use the notation $O_{\kappa}$ for a representative of the class of semigroups isomorphic to $O_{S^0}$. 

A semigroup $S$ with zero $0$ is called {\em nilpotent} if $S^{n+1} = \{0\}$ for some $n \in \mathbb N$. The least such $n$ is called {\em the nilpotency index} of $S$. For $n\in \mathbb N$, a nilpotent semigroup of nilpotency index $\leq n$ is called {\em $n$-nilpotent} \cite{Zh2013, ZhCA2017a}. 

Let $S$ be a semigroup and let $T$ be a subsemigroup of $S$. A surjective map $r:S\to T$ is called a \emph{retraction} if $r(a)=a$ for every $a\in T$. Equivalently, $r$ is an idempotent, that is, $r^2=r$. In this case, $T$ is called a \emph{retract} of $S$.
A semigroup $S$ is called an \emph{inflation} of its subsemigroup $T$ (see~\cite[Sec.~3.2]{CP}) if there exists a retraction $r:S\to T$ such that
$r(a)r(b)=ab$ for all $a,b\in S$. In the described situation $S$ is often referred to as an \emph{inflation of $T$ with retraction $r$}.
It is immediate from the definition that if $S$ is an inflation of $T$, then $S^2\subseteq T$.

\bigskip

We now recall the terminology and notation related to $g$-dimonoids that will be used throughout the paper.

\smallskip

Let $(D,\dashv, \vdash)$ be a $g$-dimonoid. Define new operations $\dashv^d$ and  $\vdash^d$ on $D$ by 
$$x \dashv^d y = y \vdash x\quad \text{  and  }\quad  x \vdash^d y = y \dashv x.$$
It is immediate to check that $(D,\dashv^d, \vdash^d)$ is a new $g$-dimonoid, called the {\em  dual $g$-dimonoid of $(D,\dashv, \vdash)$}, which we denote by $(D,\dashv, \vdash)^d$. It follows that the unary duality operation $d: (D, \dashv, \vdash) \mapsto (D, \dashv, \vdash)^{d}$ is involutive in the sense that $((D,\dashv, \vdash)^d)^d=(D,\dashv, \vdash)$. In fact, $(D,\dashv, \vdash)^d$ is a $g$-dimonoid if and only if $(D,\dashv, \vdash)$ is a $g$-dimonoid.  As usual, a $g$-dimonoid $(D,\dashv, \vdash)$ is said to be {\em self-dual} if $(D,\dashv, \vdash)^d=(D,\dashv, \vdash)$. 

\smallskip

A $g$-dimonoid $(D,\dashv,\vdash)$ is called {\em commutative}~\cite{ZhYulia2} if both semigroups $(D,\dashv)$ and $(D,\vdash)$ are commutative.

\smallskip

A map $\varphi : D_1 \to D_2$ is called a {\em homomorphism } from a $g$-dimonoid $(D_1,\dashv_1, \vdash_1)$ to a $g$-dimonoid $(D_2,\dashv_2, \vdash_2)$~\cite{ZhYurii_gdm2016} if $$\varphi(a\dashv_1 b)=\varphi(a)\dashv_2\varphi(b)\quad\text{  and  }\quad\varphi(a\vdash_1 b)=\varphi(a)\vdash_2\varphi(b)$$ for all $a,b\in D_1$.

A bijective homomorphism $\psi : D_1 \to D_2$ is called an {\em isomorphism } from a $g$-dimonoid $(D_1,\dashv_1, \vdash_1)$ to a $g$-dimonoid $(D_2,\dashv_2, \vdash_2)$.
If there exists an isomorphism from a $g$-dimonoid $(D_1,\dashv_1, \vdash_1)$ to a $g$-dimonoid $(D_2,\dashv_2, \vdash_2)$, then $(D_1, \dashv_1, \vdash_1)$ and $(D_2, \dashv_2, \vdash_2)$ are said to be {\em isomorphic}, denoted $(D_1,\dashv_1, \vdash_1)\cong (D_2,\dashv_2, \vdash_2)$. An isomorphism $\psi: D\to D$ is called an {\em   automorphism} of a $g$-dimonoid $(D,\dashv, \vdash)$. By $\Aut(D,\dashv, \vdash)$ we denote the automorphism group of a $g$-dimonoid $(D,\dashv, \vdash)$. 

A $g$-dimonoid $(D, \dashv, \vdash)$ is said to be  {\em iso-dual} if it is isomorphic to its dual $g$-dimonoid $(D, \dashv, \vdash)^d$.

\smallskip

The notion of an $n$-dinilpotent dimonoid was first introduced in~\cite{Zh2013} (see also \cite{ZhCA2017a}). Let us recall below this and other necessary definitions from~\cite{Zh2013}. An element $0$ of a dimonoid $(D,\dashv,\vdash)$ is called {\em zero}, if  $d * 0 = 0 = 0 * d$ for all
$d\in D$ and $*\in \{\dashv, \vdash\}$. A dimonoid $(D,\dashv,\vdash)$ with zero is called {\em dinilpotent}, if $(D,\dashv)$ and $(D, \vdash)$ are nilpotent semigroups. A dinilpotent dimonoid $(D,\dashv,\vdash)$ is called {\em $n$-dinilpotent}, if $(D,\dashv)$ and $(D, \vdash)$ are $n$-nilpotent semigroups. Replacing the word “dimonoid” with “$g$-dimonoid” in the definitions of dinilpotent and $n$-dinilpotent dimonoids, and relaxing the requirement of a single common zero $0$ to allow two (possibly distinct) zeros $0_{\dashv}$ and $0_{\vdash}$, one for each component
semigroup, one obtains the definitions of dinilpotent and $n$-dinilpotent $g$-dimonoids. We will give examples of 2-dinilpotent $g$-dimonoids in   Section~\ref{dcgdm}.

\smallskip

For a $g$-dimonoid $(D,\dashv,\vdash)$, if $\mathbb S$ and $\mathbb T$ denote the semigroups $(D,\dashv)$ and $(D,\vdash)$, respectively, then $\mathbb S \rbag \mathbb T$ stands for the $g$-dimonoid $(D,\dashv,\vdash)$.

\section{Inflations and $g$-dimonoids}\label{inf_gdm}

In this section, we present a method for constructing $g$-dimonoids based on the notion of inflations of semigroups.

\smallskip

Let $r:S\to T$ be a surjective map. A subset $P\subseteq S$ is called a {\em transversal} of $r$ if $P$ contains exactly one representative from each fiber $r^{-1}(t)$, $t\in T$.

\begin{proposition}\label{prop:new_inflation}
Let $(S,\cdot)$ be an inflation of a semigroup $T$ with retraction $r:S\to T$, and let $P=\{p_t\mid t\in T\}\subseteq S$ be a transversal
of $r$. Define a binary operation $\star$ on $S$ by $$x\star y=p_{r(x)r(y)},\qquad x,y\in S.$$
Then $(S,\star)$ is a semigroup, and $P$ is a subsemigroup of $(S,\star)$. Moreover, the semigroup $(S,\star)$ is an inflation of $P$ with retraction
$$
\rho:S\to P,\qquad\rho(x)=p_{r(x)},
$$
and there exists an involutive isomorphism from $(S,\cdot)$ onto $(S,\star)$.
\end{proposition}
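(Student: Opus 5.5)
The plan is to compute everything through the retraction $r$ and the bijection $t\mapsto p_t$ from $T$ onto $P$. Two identities drive the argument. First, since $p_t\in r^{-1}(t)$, we have $r(p_t)=t$. Second, for $x,y\in S$ the element $x\star y=p_{r(x)r(y)}$ lies in $P$, so $r(x\star y)=r(x)r(y)$. Associativity of $\star$ then follows at once:
$$(x\star y)\star z=p_{r(x\star y)r(z)}=p_{(r(x)r(y))r(z)}=p_{r(x)(r(y)r(z))}=x\star(y\star z),$$
using associativity in $T$. Closure of $P$ holds because every product $x\star y$ lies in $P$. The map $\rho(x)=p_{r(x)}$ is surjective onto $P$ and fixes $P$, since $\rho(p_t)=p_{r(p_t)}=p_t$. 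It also satisfies $\rho(x)\star\rho(y)=p_{r(p_{r(x)})r(p_{r(y)})}=p_{r(x)r(y)}=x\star y$. Hence $(S,\star)$ is an inflation of $P$ with retraction $\rho$.

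For the isomorphism, I would define $\sigma:S\to S$ by swapping each $t\in T$ with its representative $p_t$ and fixing every other point:
$$\sigma(t)=p_t,\qquad \sigma(p_t)=t,\qquad \sigma(x)=x \text{ for } x\notin T\cup P.$$
This is well defined because $t$ and $p_t$ lie in the same fiber $r^{-1}(t)$, since $r(t)=t$ as $r$ is a retraction. Inside each fiber, $\sigma$ is either the identity (when $p_t=t$) or the transposition of $t$ and $p_t$. So $\sigma$ is an involutive bijection that preserves fibers, that is, $r\circ\sigma=r$.

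The homomorphism check uses $S^2\subseteq T$: for $x,y\in S$ we have $xy=r(x)r(y)\in T$, hence $\sigma(xy)=p_{r(x)r(y)}$. On the other side, $\sigma(x)\star\sigma(y)=p_{r(\sigma(x))r(\sigma(y))}=p_{r(x)r(y)}$ because $\sigma$ preserves fibers. So $\sigma$ is an isomorphism $(S,\cdot)\to(S,\star)$ with $\sigma^2=\mathrm{id}$.

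The only delicate step is defining $\sigma$ consistently when $T$ and $P$ overlap. If $p_t=t$, the two rules agree. If $p_t\ne t$, then $p_t\notin T$, since the only element of $T$ in the fiber $r^{-1}(t)$ is $t$ itself. Likewise $t\notin P$, since $p_t$ is the only element of $P$ in that fiber. So the rules never conflict, and the case distinctions are disjoint.
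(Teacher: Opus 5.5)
Your proof is correct and follows the paper's argument step for step: associativity via $r(p_t)=t$, the retraction $\rho$, and the fiber-preserving swap $t\leftrightarrow p_t$ as the involutive isomorphism, which the paper checks using $xy=r(x)r(y)\in T$. Your explicit check that the swap is well defined when $T\cap P\neq\emptyset$ fills in a step the paper only asserts as ``clearly''.
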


\begin{proof}
We first prove that the operation $\star$ is associative. Let $x,y,z\in S$. Then $(x\star y)\star z = p_{\,r(x\star y)r(z)}$.
Since  $r(p_t)=t$ for each $t\in T$, we have 
$$r(x\star y) = r(p_{r(x)r(y)}) =r(x)r(y).$$
Hence $(x\star y)\star z = p_{(r(x)r(y))r(z)}$. Similarly, $x\star(y\star z) = p_{r(x)(r(y)r(z))}$.

Since multiplication in $T$ is associative, it follows that
$$(x\star y)\star z = p_{(r(x)r(y))r(z)} = p_{r(x)(r(y)r(z))} =x\star(y\star z).$$
Consequently, $(S,\star)$ is a semigroup.

Now let $p_t,p_u\in P$. Since $r(p_t)=t$ and $r(p_u)=u$,
$$p_t\star p_u = p_{r(p_t)r(p_u)} = p_{tu}\in P.$$
Hence $P$ is closed under $\star$, and therefore is a subsemigroup of
$(S,\star)$.

The map $\rho:S\to P$ is surjective by definition of $P$. Moreover, for every $x\in S$,
$$\rho^2(x) = \rho(\rho(x)) = \rho(p_{r(x)}) = p_{r(p_{r(x)})} = p_{r(x)} = \rho(x).$$

Next, for all $x,y\in S$,
$$\rho(x)\star\rho(y) = p_{r(x)}\star p_{r(y)} = p_{\,r(p_{r(x)})\,r(p_{r(y)})} = p_{r(x)r(y)} = x\star y.$$

Thus $\rho$ is a surjective idempotent map satisfying $\rho(x)\star\rho(y)=x\star y$ for all $x,y\in S$. Therefore, $(S,\star)$ is an inflation of the
subsemigroup $P$.

To complete the proof, we construct an involutive isomorphism from $(S,\cdot)$ onto $(S,\star)$.

Define a map $\psi:S\to S$ by $\psi(t)=p_t$, $\psi(p_t)=t$ for every $t\in T$, and
$\psi(x)=x$ for all remaining elements of $S$. Clearly, $\psi$ is an involutive bijection in the sense that $\psi^2=\id_S$ and
$r(\psi(x))=r(x)$ for every $x\in S$.

Now let $x,y\in S$. Since $(S,\cdot)$ is an inflation of $T$, it follows that $x\cdot y=r(x)r(y)\in T$.
Hence
$$
\psi(x\cdot y)
 =p_{r(x)r(y)}
 =p_{r(\psi(x))\,r(\psi(y))}
 =\psi(x)\star\psi(y).
$$
Therefore, $\psi$ is an involutive isomorphism from $(S,\cdot)$ onto $(S,\star)$.
\end{proof}

\bigskip

We now pass to $g$-dimonoids.

\begin{theorem}\label{thm:inflation_g_dimonoid}
Let $(S,\cdot)$ be an inflation of a semigroup $T$ with retraction $r:S\to T$, and let $P=\{p_t\mid t\in T\}$ be an arbitrary transversal of $r$.
Define binary operations on $S$ by
$$x\dashv y:=xy,\qquad x\vdash y:=p_{r(x)r(y)}.$$
Then $(S,\dashv,\vdash)$ is a $g$-dimonoid whose component semigroups
$(S,\dashv)$ and $(S,\vdash)$ are isomorphic.
Moreover, $(S,\dashv,\vdash)$ is iso-dual whenever $(S,\dashv)$ is commutative, and it is a dimonoid if and only if $p_t=t$  for all $t\in T^3$.
\end{theorem}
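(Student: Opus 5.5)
Most of the statement reduces to Proposition~\ref{prop:new_inflation}. By that proposition, $(S,\vdash)=(S,\star)$ is a semigroup, and the involution $\psi$ constructed there is an isomorphism from $(S,\dashv)$ onto $(S,\vdash)$. This settles associativity of both operations and the isomorphism of the component semigroups.

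For the axioms I will use two facts: $xy=r(x)r(y)\in T$, so $r(xy)=r(x)r(y)$ (since $r$ fixes $T$), and $r(p_t)=t$. Then every product of three elements has an $r$-value equal to $r(x)r(y)r(z)$, whichever operations are used.
\begin{itemize}
\item For $(D_1)$: $(x\dashv y)\dashv z=r(xy)r(z)=r(x)r(y)r(z)$, and $x\dashv(y\vdash z)=r(x)\,r(p_{r(y)r(z)})=r(x)r(y)r(z)$.
\item For $(D_3)$: $(x\dashv y)\vdash z=p_{r(xy)r(z)}=p_{r(x)r(y)r(z)}$, and $x\vdash(y\vdash z)=p_{r(x)r(y)r(z)}$.
\end{itemize}
So $(S,\dashv,\vdash)$ is a $g$-dimonoid.

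For the dimonoid criterion, the remaining axiom $(D_2)$ reads
$$(x\vdash y)\dashv z=r(x)r(y)r(z)\quad\text{and}\quad x\vdash(y\dashv z)=p_{r(x)r(y)r(z)}.$$
Thus $(D_2)$ holds if and only if $p_t=t$ for every $t$ of the form $r(x)r(y)r(z)$. Since $r$ is surjective onto $T$, these products are exactly the elements of $T^3$, which gives the stated equivalence.

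For iso-duality, assume $(S,\dashv)$ is commutative. Then $T$ is commutative: it is a subsemigroup, and $t\cdot u=u\cdot t$ in $S$. Hence $\vdash$ is commutative as well. The dual structure is $x\dashv^d y=y\vdash x=x\vdash y$ and $x\vdash^d y=y\dashv x=x\dashv y$, so the dual is $(S,\vdash,\dashv)$, i.e. the operations are swapped. The candidate isomorphism is the involution $\psi$ from Proposition~\ref{prop:new_inflation}. We already have $\psi(x\dashv y)=\psi(x)\vdash\psi(y)$, which is the first required condition. Because $\psi^2=\id_S$, applying $\psi$ to this identity with $x,y$ replaced by $\psi(x),\psi(y)$ yields $\psi(x\vdash y)=\psi(x)\dashv\psi(y)$, the second condition. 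So $\psi:(S,\dashv,\vdash)\to(S,\dashv,\vdash)^d$ is an isomorphism. The only delicate point in the whole argument is keeping track of this operation swap, and the involutivity of $\psi$ handles it directly.
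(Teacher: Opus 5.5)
Your proof is correct and follows essentially the same route as the paper: associativity and the isomorphism of the component semigroups come from Proposition~\ref{prop:new_inflation}, the axioms $(D_1)$, $(D_3)$ and the criterion for $(D_2)$ are obtained from $r(xy)=r(x)r(y)$ and $r(p_t)=t$, and iso-duality uses the involution $\psi$ together with $\psi^2=\id_S$. The only cosmetic difference is that you get commutativity of $\vdash$ from commutativity of $T$, whereas the paper transports it along $\psi$; both arguments are valid.
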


\begin{proof}
Since $(S,\dashv)$ coincides with the original semigroup $(S,\cdot)$, the ope\-ration $\dashv$ is associative. By Proposition~\ref{prop:new_inflation},
the operation $\vdash$ is associative as well.

It remains to verify axioms $(D_1)$ and $(D_3)$.
For arbitrary $x,y,z\in S$, we have $(x\dashv y)\dashv z=(xy)z$. Since $S$ is an inflation of $T$, it follows that $r(xy)=r(r(x)r(y))=r(x)r(y)$,
and therefore
$$(xy)z = r(xy)r(z) =(r(x)r(y))r(z).$$

On the other hand, $x\dashv(y\vdash z)=x\,p_{r(y)r(z)}$.
Since $r(p_t)=t$ for every $t\in T$, we obtain
$$ x\,p_{r(y)r(z)} = r(x)\,r(p_{r(y)r(z)}) = r(x)(r(y)r(z)).$$
Since multiplication in $T$ is associative,
$(r(x)r(y))r(z) = r(x)(r(y)r(z))$.
Hence $(x\dashv y)\dashv z = x\dashv(y\vdash z)$, and axiom $(D_1)$ holds.

Next,
$$(x\dashv y)\vdash z = (xy)\vdash z = p_{r(xy)r(z)}= p_{(r(x)r(y))r(z)}.$$

Furthermore,
$$x\vdash(y\vdash z) = x\vdash p_{r(y)r(z)} = p_{r(x)\,r(p_{r(y)r(z)})} = p_{r(x)(r(y)r(z))}.$$

Again, associativity of multiplication in $T$ yields $$(x\dashv y)\vdash z = x\vdash(y\vdash z).$$ Thus axiom $(D_3)$ also holds.
Therefore, $(S,\dashv,\vdash)$ is a $g$-dimonoid whose component semigroups $(S,\dashv)$ and $(S,\vdash)$ are isomorphic by Proposition~\ref{prop:new_inflation}.

\smallskip

Next, assume  that $(S,\dashv)$ is commutative. By Proposition~\ref{prop:new_inflation}, there exists an involutive isomorphism $\psi:S\to S$ from  $(S,\dashv)$ onto $(S,\vdash)$. Hence $(S,\vdash)$ is also commutative.
Therefore,
$$
\psi(x\dashv y)=\psi(x)\vdash\psi(y)
=\psi(y)\vdash\psi(x)
=\psi(x)\dashv^d\psi(y).
$$

Taking into account that $\psi$ is involutive in the sense that $\psi^2=\id_S$, we conclude that
$$
x\vdash y =\psi(\psi(x))\vdash\psi(\psi(y))
=\psi(\psi(x)\dashv\psi(y)).
$$
Applying $\psi$ to both sides yields
$$
\psi(x\vdash y)
=\psi(x)\dashv\psi(y)
=\psi(y)\dashv\psi(x)
=\psi(x)\vdash^d\psi(y).
$$

Therefore, $\psi$ is an isomorphism from $(S,\dashv,\vdash)$ onto its dual $g$-dimo\-noid $(S,\dashv,\vdash)^d$, and hence $(S,\dashv,\vdash)$ is iso-dual.

\smallskip

Finally, we determine when axiom $(D_2)$ holds. We have
$$(x\vdash y)\dashv z = p_{r(x)r(y)}\,z = r(p_{r(x)r(y)})\,r(z) = (r(x)r(y))r(z),$$
where the second equality follows from the defining property of an
inflation. On the other hand,
$$x\vdash(y\dashv z) = p_{r(x)r(y\dashv z)} = p_{r(x)r(yz)} = p_{r(x)(r(y)r(z))} = p_{(r(x)r(y))r(z)}.$$

Therefore, axiom $(D_2)$ is equivalent to $t=p_t$ for every element $t=(r(x)r(y))r(z)$. Since $r$ is surjective, this is equivalent to
$p_t=t$ for all $t\in T^3$. Hence $(S,\dashv,\vdash)$ is a dimonoid if and only if $p_t=t$ for all $t\in T^3$.
\end{proof}

The next proposition provides a criterion for two $g$-dimonoids arising from different transversals to be isomorphic.

\begin{proposition}\label{thm:isomorphic_transversals}
Let $P=\{p_t\mid t\in T\}$ and
$Q=\{q_t\mid t\in T\}$ be two transversals of an inflation $(S,\cdot)$ of a semigroup $T$ with retraction  $r:S\to T$.
Let $(S,\dashv,\vdash_P)$ and $(S,\dashv,\vdash_Q)$ be the $g$-dimonoids obtained from $P$ and $Q$, respectively, by the construction of Theorem~\ref{thm:inflation_g_dimonoid}.
Then $(S,\dashv,\vdash_P)\cong(S,\dashv,\vdash_Q)$ if and only if there exists an automorphism
$\psi$ of the semigroup $(S,\cdot)$ such that
$$\psi(p_{r(x)r(y)})=q_{\,r(\psi(x))\,r(\psi(y))}$$
for all $x,y\in S$.
\end{proposition}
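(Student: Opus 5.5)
This is essentially an unwinding of definitions. An isomorphism of $g$-dimonoids must respect both operations. So a bijection $\psi:S\to S$ is an isomorphism from $(S,\dashv,\vdash_P)$ onto $(S,\dashv,\vdash_Q)$ exactly when two conditions hold. First, $\psi(x\dashv y)=\psi(x)\dashv\psi(y)$, that is, $\psi$ is an automorphism of the semigroup $(S,\cdot)$, since both $g$-dimonoids share the first operation $xy$. Second, $\psi(x\vdash_P y)=\psi(x)\vdash_Q\psi(y)$ for all $x,y\in S$. I would simply rewrite this second condition using the definitions from Theorem~\ref{thm:inflation_g_dimonoid}: the left side is $\psi(p_{r(x)r(y)})$, and the right side is $q_{r(\psi(x))r(\psi(y))}$. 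The displayed identity in the statement is therefore precisely the $\vdash$-compatibility condition.

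For the forward direction, take an isomorphism $\psi$. It is a bijective homomorphism for $\dashv$, hence an automorphism of $(S,\cdot)$. Its compatibility with $\vdash$ gives the identity after substituting the definitions of $\vdash_P$ and $\vdash_Q$. For the converse, suppose $\psi\in\Aut(S,\cdot)$ satisfies the identity. Then $\psi$ is bijective and preserves $\dashv$. Reading the identity backwards shows that it preserves $\vdash$, so $\psi$ is an isomorphism of $g$-dimonoids.

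There is no real obstacle; the only point to state explicitly is that both structures use the same operation $\dashv=\cdot$, so preserving $\dashv$ is literally being a semigroup automorphism. One could remark that such an automorphism maps $T=S^2$ onto itself and commutes suitably with $r$, but this is not needed for the equivalence.
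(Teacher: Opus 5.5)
Your proof is correct and is the same as the paper's. Both directions just observe two things: preserving the shared operation $\dashv$ means being an automorphism of $(S,\cdot)$, and preserving $\vdash$ is exactly the displayed identity once you substitute the definitions of $\vdash_P$ and $\vdash_Q$.

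Your closing aside is inaccurate, though harmless since you do not use it. In general $S^2=T^2$ can be a proper subset of $T$, and an automorphism of $(S,\cdot)$ need not preserve $T$ or commute with $r$. That is precisely why Corollary~\ref{cor:isomorphic_transversals} has to assume commutation with $r$ explicitly.
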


\begin{proof}
Suppose that $\psi:S\to S$ is an isomorphism from $(S,\dashv,\vdash_P)$ to $(S,\dashv,\vdash_Q)$. Since $x\dashv y=xy$ in both $(S,\dashv,\vdash_P)$ and $(S,\dashv,\vdash_Q)$, it follows that $\psi$ is an automorphism of the semigroup $(S,\cdot)$.

Furthermore,
$$\psi(x\vdash_Py) = \psi(x)\vdash_Q\psi(y)$$
for all $x,y\in S$. By the definitions of $\vdash_P$ and $\vdash_Q$,
this is equivalent to
$$\psi(p_{r(x)r(y)}) = q_{\,r(\psi(x))\,r(\psi(y))}$$
for all $x,y\in S$.

Conversely, suppose that $\psi$ is an automorphism of the semigroup
$(S,\cdot)$ satisfying
$$\psi(p_{r(x)r(y)}) = q_{\,r(\psi(x))\,r(\psi(y))}$$
for all $x,y\in S$.

Since $\psi$ is an automorphism of $(S,\cdot)$ and
$$\psi(x\vdash_Py) = \psi(p_{r(x)r(y)}) = q_{\,r(\psi(x))\,r(\psi(y))} = \psi(x)\vdash_Q\psi(y),$$
it follows that $\psi$ is an isomorphism of $g$-dimonoids $(S,\dashv,\vdash_P)$ and  $(S,\dashv,\vdash_Q)$. This completes the proof.
\end{proof}

\begin{corollary}\label{cor:isomorphic_transversals}
Under the assumptions of Proposition~\ref{thm:isomorphic_transversals},
assume in addition that every automorphism $\psi$ of the semigroup
$(S,\cdot)$ commutes with the retraction $r$, that is, $r\circ\psi=\psi\circ r$.
Then the $g$-dimonoids $(S,\dashv,\vdash_P)$ and $(S,\dashv,\vdash_Q)$ are isomorphic if and only if
there exists an automorphism $\psi$ of the semigroup $(S,\cdot)$
such that $\psi(p_t)=q_{\psi(t)}$ for every $t\in T^2$.
\end{corollary}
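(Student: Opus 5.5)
The plan is to reduce the condition of Proposition~\ref{thm:isomorphic_transversals} to the stated one, using the commutation hypothesis $r\circ\psi=\psi\circ r$ on both sides of the equivalence.

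First I would show that every automorphism $\psi$ of $(S,\cdot)$ maps $T$ into $T$, so that the expression $q_{\psi(t)}$ makes sense for $t\in T$. For $t\in T$ we have $r(t)=t$. Hence
$$\psi(t)=\psi(r(t))=r(\psi(t))\in T.$$
Since $\psi$ is a homomorphism of $(S,\cdot)$, its restriction to $T$ is a homomorphism $T\to T$. In particular
$$\psi(r(x)r(y))=\psi(r(x))\,\psi(r(y))$$
for all $x,y\in S$.

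Next I would rewrite the right-hand side of the condition in Proposition~\ref{thm:isomorphic_transversals}. Using the hypothesis and then the homomorphism property,
$$r(\psi(x))\,r(\psi(y))=\psi(r(x))\,\psi(r(y))=\psi\bigl(r(x)r(y)\bigr).$$
So the condition $\psi(p_{r(x)r(y)})=q_{r(\psi(x))r(\psi(y))}$ for all $x,y\in S$ becomes
$$\psi(p_t)=q_{\psi(t)}\quad\text{for every } t=r(x)r(y) \text{ with } x,y\in S.$$

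Finally, I would note that $r$ is surjective onto $T$. Therefore the products $r(x)r(y)$, as $x,y$ range over $S$, are exactly the elements of $T^2$. The rewritten condition is thus equivalent to $\psi(p_t)=q_{\psi(t)}$ for all $t\in T^2$. Combining this with Proposition~\ref{thm:isomorphic_transversals} gives both directions of the corollary at once, since the same automorphism $\psi$ witnesses either condition.

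The only point needing care is the first step: that $\psi(T)\subseteq T$ and that $\psi|_T$ is multiplicative. Both follow directly from the commutation hypothesis and the fact that $\psi$ is a homomorphism, so I expect no real obstacle.
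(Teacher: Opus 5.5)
Your proposal is correct and follows essentially the same route as the paper: you rewrite $r(\psi(x))\,r(\psi(y))$ as $\psi(r(x)r(y))$ using the commutation hypothesis and multiplicativity of $\psi$, then use surjectivity of $r$ to identify the set of products $r(x)r(y)$ with $T^2$. Your preliminary check that $\psi(T)\subseteq T$, so that $q_{\psi(t)}$ is defined, is a small detail the paper leaves implicit.
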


\begin{proof}
By Proposition~\ref{thm:isomorphic_transversals}, the $g$-dimonoids
$(S,\dashv,\vdash_P)$ and $(S,\dashv,\vdash_Q)$ are isomorphic if and only if there exists an
automorphism $\psi$ of $(S,\cdot)$ satisfying
$$\psi(p_{r(x)r(y)}) = q_{\,r(\psi(x))\,r(\psi(y))}$$
for all $x,y\in S$.

Since $\psi$ commutes with the retraction $r$ and $\psi$ is a semigroup automorphism,
$$r(\psi(x))\,r(\psi(y)) = \psi(r(x))\psi(r(y)) = \psi(r(x)r(y)),$$
and hence
$$\psi(p_{r(x)r(y)}) = q_{\psi(r(x)r(y))}$$
for all $x,y\in S$.

Since every element of $T^2$ can be written as $r(x)r(y)$ for suitable
$x,y\in S$, the latter condition is equivalent to
$\psi(p_t)=q_{\psi(t)}$
for every $t\in T^2$.
\end{proof}

\section{Some classes of $2$-dinilpotent commutative $g$-dimonoids}\label{dcgdm}

In this section, we construct several classes of $2$-dinilpotent commutative $g$-dimonoids and investigate the isomorphism problem for $g$-dimonoids in these classes.

\begin{proposition}\label{comm_mon_gdm}
Let $D$ be a set with $|D|\geq 3$, and let $a,b,0_{\dashv}\in D$ and $a,c,0_{\vdash}\in D$ be pairwise distinct within each triple.
Define two binary operations $\dashv$ and $\vdash$ on $D$ by
$$
x\dashv y=
\begin{cases}
b,&\text{if }x=y=a\\
0_{\dashv},&\text{otherwise}
\end{cases}
\quad\text{and}\quad
x\vdash y=
\begin{cases}
c,&\text{if }x=y=a\\
0_{\vdash},&\text{otherwise}.
\end{cases}
$$
Then $(D,\dashv,\vdash)$ is a  $2$-dinilpotent commutative $g$-dimonoid. Moreover, it is a dimonoid if and only if
$0_{\dashv}=0_{\vdash}$.
\end{proposition}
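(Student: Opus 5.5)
Both operations are defined by the same pattern: each sends $(a,a)$ to a distinguished element and every other pair to a fixed "zero". So I would first record a general observation about such operations and then verify the $g$-dimonoid axioms by a short case analysis.

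\textbf{Each component is a commutative 2-nilpotent semigroup.} Let $*$ be defined by $a*a=e$ and $x*y=z$ otherwise, where $a,e,z$ are pairwise distinct. Since $e\neq a$ and $z\neq a$, every product of two elements differs from $a$. Hence any product with three factors has an inner factor different from $a$, so it equals $z$. Thus $(x*y)*w=z=x*(y*w)$, which gives associativity, and all triple products equal $z$. Commutativity is clear from the symmetric definition. The element $z$ is a zero: $z*x=z$ because $z\neq a$. Moreover $S^3=\{z\}$, and $S^2=\{e,z\}\neq\{z\}$ because $e\neq z$. Applying this to $(D,\dashv)$ with $(e,z)=(b,0_{\dashv})$ and to $(D,\vdash)$ with $(e,z)=(c,0_{\vdash})$ shows that $(D,\dashv,\vdash)$ is $2$-dinilpotent and commutative, once we know it is a $g$-dimonoid.

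\textbf{Axioms $(D_1)$ and $(D_3)$.} The same observation does the work. For $(D_1)$, the left side $(x\dashv y)\dashv z$ equals $0_{\dashv}$. On the right side, $y\vdash z\in\{c,0_{\vdash}\}$ is never $a$, so $x\dashv(y\vdash z)=0_{\dashv}$ as well. For $(D_3)$, the right side $x\vdash(y\vdash z)$ equals $0_{\vdash}$. On the left side, $x\dashv y\in\{b,0_{\dashv}\}$ is never $a$, so $(x\dashv y)\vdash z=0_{\vdash}$.

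\textbf{Axiom $(D_2)$.} Here the left side is $(x\vdash y)\dashv z=0_{\dashv}$, since $x\vdash y\neq a$. The right side is $x\vdash(y\dashv z)=0_{\vdash}$, since $y\dashv z\neq a$. Therefore $(D_2)$ holds for all $x,y,z$ if and only if $0_{\dashv}=0_{\vdash}$. The axiom $(D_2)$ is the only one that can fail, and the triple $x=y=z=a$ already witnesses the failure when the zeros differ.

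For the dimonoid notion of dinilpotency, a single common zero is required. This is automatic in the case $0_{\dashv}=0_{\vdash}$, so there is no inconsistency. I expect no real obstacle beyond keeping track of the fact that neither $b,0_{\dashv}$ nor $c,0_{\vdash}$ equals $a$; that fact is exactly what the hypothesis of pairwise distinctness within each triple guarantees.
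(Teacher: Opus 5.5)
Your proof is correct and follows the paper's argument essentially step by step. Associativity and $2$-nilpotency come from the fact that no product equals $a$, $(D_1)$ and $(D_3)$ follow from the same observation, and $(D_2)$ reduces to $0_{\dashv}=0_{\vdash}$; your extra remarks (that $z$ is a genuine zero, that $S^2\neq\{z\}$, and the witness $x=y=z=a$) are harmless additions the paper does not need.
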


\begin{proof}
Both operations are clearly commutative.

Since $b\neq a$ and $0_{\dashv}\neq a$, we have
$$(x\dashv y)\dashv z = 0_{\dashv} = x\dashv(y\dashv z),$$
for all $x,y,z\in D$. Hence $\dashv$ is associative.

Similarly, since $c\neq a$ and $0_{\vdash}\neq a$, we obtain
$$(x\vdash y)\vdash z = 0_{\vdash} = x\vdash(y\vdash z),$$
for all $x,y,z\in D$. Thus $\vdash$ is associative.

It follows that every triple product in $(D,\dashv)$ equals $0_{\dashv}$, and every triple product in $(D,\vdash)$ equals $0_{\vdash}$. Hence both component semigroups are $2$-nilpotent, and therefore $(D,\dashv,\vdash)$ is a $2$-dinilpotent $g$-dimonoid.

Furthermore, since $x\dashv y\in\{b,0_{\dashv}\},\quad y\vdash z\in\{c,0_{\vdash}\}$, and none of the elements
$b$, $0_{\dashv}$, $c$, and $0_{\vdash}$ equals $a$, we obtain
$$
(x\dashv y)\dashv z = 0_{\dashv} = x\dashv(y\vdash z)
\quad\text{ and }\quad
(x\dashv y)\vdash z = 0_{\vdash} = x\vdash(y\vdash z).
$$
Hence axioms $(D_1)$ and $(D_3)$ hold.

Finally, since $x\vdash y\in\{c,0_{\vdash}\}$ and neither $c$ nor $0_{\vdash}$ equals $a$, it follows that $(x\vdash y)\dashv z = 0_{\dashv}$
for all $x,y,z\in D$. On the other hand,  $y\dashv z\in\{b,0_{\dashv}\}$ and neither $b$ nor $0_{\dashv}$ equals $a$, whence
$x\vdash(y\dashv z) = 0_{\vdash}$ for all $x,y,z\in D$. Therefore, axiom $(D_2)$ holds if and only if
$0_{\dashv}=0_{\vdash}$. Consequently, $(D,\dashv,\vdash)$ is a dimonoid if and only if $0_{\dashv}=0_{\vdash}$.
\end{proof}

The following proposition provides a criterion for the isomorphism of $g$-dimonoids defined above.

\begin{proposition}\label{iso_comm_mon_gdm}
For each $i\in\{1,2\}$, let $D_i$ be a set with $|D_i|\ge3$,  let $a_i,b_i,0_{\dashv,i}\in D_i$ and
$a_i,c_i,0_{\vdash,i}\in D_i$ be pairwise distinct within each triple, and let $(D_i,\dashv_i,\vdash_i)$ be the $g$-dimonoid defined by
$$
x\dashv_i y=
\begin{cases}
b_i,&\text{if }x=y=a_i\\
0_{\dashv,i},&\text{otherwise}
\end{cases}
\quad\text{and}\quad
x\vdash_i y=
\begin{cases}
c_i,&\text{if }x=y=a_i\\
0_{\vdash,i},&\text{otherwise}.
\end{cases}
$$
Then $(D_1,\dashv_1,\vdash_1)\cong(D_2,\dashv_2,\vdash_2)$ if and only if there exists a bijection
$\psi:D_1\to D_2$ such that
$$
\psi(a_1)=a_2,\
\psi(b_1)=b_2,\
\psi(c_1)=c_2,\
\psi(0_{\dashv,1})=0_{\dashv,2},\
\psi(0_{\vdash,1})=0_{\vdash,2}.
$$
\end{proposition}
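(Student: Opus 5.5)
We prove the two implications separately; the sufficiency direction is a direct verification, and necessity amounts to recognizing the distinguished elements intrinsically.

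\emph{Sufficiency.} Suppose $\psi:D_1\to D_2$ is a bijection sending $a_1,b_1,c_1,0_{\dashv,1},0_{\vdash,1}$ to $a_2,b_2,c_2,0_{\dashv,2},0_{\vdash,2}$, respectively. We check $\psi(x\dashv_1 y)=\psi(x)\dashv_2\psi(y)$ in two cases.
\begin{itemize}
\item If $x=y=a_1$, both sides equal $b_2$.
\item Otherwise the left side is $\psi(0_{\dashv,1})=0_{\dashv,2}$. Since $\psi$ is injective with $\psi(a_1)=a_2$, the pair $(\psi(x),\psi(y))\neq(a_2,a_2)$, so the right side is also $0_{\dashv,2}$.
\end{itemize}
The same argument with $c_i$ and $0_{\vdash,i}$ handles $\vdash$. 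Hence $\psi$ is an isomorphism.

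\emph{Necessity.} Let $\psi$ be an isomorphism from $(D_1,\dashv_1,\vdash_1)$ onto $(D_2,\dashv_2,\vdash_2)$. We show that each distinguished element is determined by the operations alone, so $\psi$ must preserve it.

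\begin{itemize}
\item \emph{The zeros.} $0_{\dashv,i}$ is the zero of the semigroup $(D_i,\dashv_i)$. It absorbs because $x\dashv_i 0_{\dashv,i}=0_{\dashv,i}$, using $0_{\dashv,i}\neq a_i$. A zero is unique, and isomorphisms map zeros to zeros, so $\psi(0_{\dashv,1})=0_{\dashv,2}$. Likewise $\psi(0_{\vdash,1})=0_{\vdash,2}$.
\item \emph{The element $a_i$.} It is the unique element with $a_i\dashv_i a_i\neq 0_{\dashv,i}$, since $a_i\dashv_i a_i=b_i\neq 0_{\dashv,i}$ while every other product equals $0_{\dashv,i}$. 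Then $\psi(a_1)\dashv_2\psi(a_1)=\psi(b_1)\neq\psi(0_{\dashv,1})=0_{\dashv,2}$, which forces $\psi(a_1)=a_2$.
\item \emph{The elements $b_i$ and $c_i$.} We have $\psi(b_1)=\psi(a_1\dashv_1a_1)=a_2\dashv_2a_2=b_2$, and similarly $\psi(c_1)=c_2$.
\end{itemize}

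The only point needing care is the intrinsic characterization of $a_i$, which rests on $b_i\ne 0_{\dashv,i}$. This is guaranteed by the pairwise-distinctness hypothesis, so there is no real obstacle.
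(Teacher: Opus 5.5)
Your proof is correct and follows essentially the same route as the paper. For necessity, the isomorphism preserves the two zeros, $a_2$ is identified as the unique $x$ with $x\dashv_2 x\neq 0_{\dashv,2}$, and then $b_2,c_2$ are read off; sufficiency is the case split on whether $(x,y)=(a_1,a_1)$, where your explicit use of injectivity of $\psi$ to get $(\psi(x),\psi(y))\neq(a_2,a_2)$ fills in a step the paper leaves implicit.
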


\begin{proof}
Suppose that $\psi:D_1\to D_2$ is an isomorphism from $(D_1,\dashv_1,\vdash_1)$ to $ (D_2,\dashv_2,\vdash_2)$.
Then $\psi$ is simultaneously an isomorphism of the semigroups $(D_1,\dashv_1)$ and $(D_2,\dashv_2)$, and of
$(D_1,\vdash_1)$ and $(D_2,\vdash_2)$. Hence it preserves their zeros, that is,
$\psi(0_{\dashv,1})=0_{\dashv,2}$ and $\psi(0_{\vdash,1})=0_{\vdash,2}$.

Since $a_1\dashv_1a_1=b_1$, we obtain $\psi(a_1)\dashv_2\psi(a_1)=\psi(b_1)$.
Taking into account that $\psi(0_{\dashv,1})=0_{\dashv,2}$ and $b_1\neq0_{\dashv,1}$, we conclude that $\psi(b_1)\neq0_{\dashv,2}$.
Hence $\psi(a_1)=a_2$, since $a_2$ is the unique element $x\in D_2$ such that $x\dashv_2x\neq0_{\dashv,2}$.
Consequently, $\psi(b_1)=b_2$.
Similarly, $\psi(c_1)=c_2$.

Conversely, let $\psi:D_1\to D_2$ be a bijection satisfying
$$
\psi(a_1)=a_2,\
\psi(b_1)=b_2,\
\psi(c_1)=c_2,\
\psi(0_{\dashv,1})=0_{\dashv,2},\
\psi(0_{\vdash,1})=0_{\vdash,2}.
$$
If $(x,y)=(a_1,a_1)$, then
$$ \psi(x\dashv_1y) = \psi(b_1) = b_2 = a_2\dashv_2a_2 = \psi(x)\dashv_2\psi(y).$$
If $(x,y)\neq(a_1,a_1)$, then
$$\psi(x\dashv_1y) = \psi(0_{\dashv,1}) = 0_{\dashv,2} = \psi(x)\dashv_2\psi(y).$$
Hence $\psi$ preserves $\dashv$. The verification that $\psi$ preserves $\vdash$ is analogous. Therefore, $\psi$ is an isomorphism.
\end{proof}

\begin{corollary}\label{isodual_comm_mon_gdm}
Let $(D,\dashv,\vdash)$ be the $g$-dimonoid defined in
Proposition~\ref{comm_mon_gdm}. Then $(D,\dashv,\vdash)$ is iso-dual if
and only if $$b=0_{\vdash}\Longleftrightarrow c=0_{\dashv}.$$
\end{corollary}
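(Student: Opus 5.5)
The plan is to compute the dual $g$-dimonoid explicitly and then apply Proposition~\ref{iso_comm_mon_gdm}. Since both operations are commutative, the dual operations are
$$x\dashv^d y=y\vdash x=x\vdash y,\qquad x\vdash^d y=y\dashv x=x\dashv y.$$
Thus $(D,\dashv,\vdash)^d$ is again a $g$-dimonoid of the type in Proposition~\ref{comm_mon_gdm}, with the roles of the parameters swapped. It has distinguished element $a$ and square $a\dashv^d a=c$, with zero $0_{\vdash}$ for $\dashv^d$. Its other square is $a\vdash^d a=b$, with zero $0_{\dashv}$ for $\vdash^d$. The triples $a,c,0_\vdash$ and $a,b,0_\dashv$ are pairwise distinct, so the hypotheses of Proposition~\ref{iso_comm_mon_gdm} are met.

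By Proposition~\ref{iso_comm_mon_gdm}, $(D,\dashv,\vdash)$ is iso-dual if and only if there is a bijection $\psi:D\to D$ with
$$\psi(a)=a,\quad \psi(b)=c,\quad \psi(c)=b,\quad \psi(0_\dashv)=0_\vdash,\quad \psi(0_\vdash)=0_\dashv.$$
So the problem reduces to a purely combinatorial question: when does the partial assignment $a\mapsto a$, $b\mapsto c$, $c\mapsto b$, $0_\dashv\mapsto 0_\vdash$, $0_\vdash\mapsto 0_\dashv$ define a well-defined injective partial map? Any such partial injection on a finite or infinite set extends to a bijection of $D$. This is because the domain and image are the same set $\{a,b,c,0_\dashv,0_\vdash\}$, so the map can be extended by the identity outside it.

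The main step, and the only real obstacle, is checking consistency under all possible coincidences among $b,c,0_\dashv,0_\vdash$. The element $a$ is distinct from the other four, so it causes no trouble. The potential coincidences are $b=c$, $0_\dashv=0_\vdash$, $b=0_\vdash$, and $c=0_\dashv$.

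\begin{itemize}
\item If $b=c$, the two requirements $\psi(b)=c$ and $\psi(c)=b$ agree.
\item If $0_\dashv=0_\vdash$, the two zero requirements likewise agree.
\item If $b=0_\vdash$, the requirements become $\psi(b)=c$ and $\psi(b)=0_\dashv$, which are consistent only if $c=0_\dashv$.
\item Symmetrically, if $c=0_\dashv$, consistency forces $b=0_\vdash$.
\end{itemize}

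Conversely, suppose $b=0_\vdash$ and $c=0_\dashv$ hold simultaneously. Then all requirements read $\psi(b)=c$ and $\psi(c)=b$, which is consistent. Injectivity follows in each consistent case because the assignment is an involution on the set $\{a,b,c,0_\dashv,0_\vdash\}$.

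Hence a suitable $\psi$ exists exactly when $b=0_\vdash\Leftrightarrow c=0_\dashv$, which is the claimed criterion.
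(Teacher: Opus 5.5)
Your proof is correct and takes the same approach as the paper. You compute the dual by swapping the two operations, use Proposition~\ref{iso_comm_mon_gdm} to reduce iso-duality to the existence of a bijection with $\psi(a)=a$, $\psi(b)=c$, $\psi(c)=b$, $\psi(0_{\dashv})=0_{\vdash}$, $\psi(0_{\vdash})=0_{\dashv}$, and then observe that the only inconsistency arises when exactly one of $b=0_{\vdash}$, $c=0_{\dashv}$ holds. Your involution argument for injectivity and the identity extension outside $\{a,b,c,0_{\dashv},0_{\vdash}\}$ make explicit two points the paper leaves implicit.
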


\begin{proof}
Since $(D,\dashv,\vdash)$ is commutative, its dual $g$-dimonoid
$(D,\dashv,\vdash)^d$ is given by
$$
x\dashv^d y=x\vdash y,\qquad
x\vdash^d y=x\dashv y.
$$
Hence
$$
x\dashv^d y=
\begin{cases}
c,&\text{if }x=y=a\\
0_{\vdash},&\text{otherwise}
\end{cases}
\quad\text{and}\quad
x\vdash^d y=
\begin{cases}
b,&\text{if }x=y=a\\
0_{\dashv},&\text{otherwise}.
\end{cases}
$$
Applying Proposition~\ref{iso_comm_mon_gdm} to
$(D,\dashv,\vdash)$ and $(D,\dashv,\vdash)^d$, we conclude that
$(D,\dashv,\vdash)$ is iso-dual if and only if there exists a bijection
$\psi:D\to D$ satisfying
$$
\psi(a)=a,\quad
\psi(b)=c,\quad
\psi(c)=b,\quad
\psi(0_{\dashv})=0_{\vdash},\quad
\psi(0_{\vdash})=0_{\dashv}.
$$

Since $a,b,0_{\dashv}$ are pairwise distinct, and $a,c,0_{\vdash}$ are
pairwise distinct, we have
$$
a\neq b,\quad
a\neq c,\quad
a\neq0_{\dashv},\quad
a\neq0_{\vdash},\quad
b\neq0_{\dashv},\quad
c\neq0_{\vdash}.
$$
Moreover, the equalities $b=c$ and $0_{\dashv}=0_{\vdash}$ do not cause any inconsistency, since in either case the prescribed images coincide. Thus the only possible incompatibility occurs when exactly one of the equalities $b=0_{\vdash}$ and $c=0_{\dashv}$ holds. Therefore, the above conditions define a bijection on the distinguished elements if and only if 
$$b=0_{\vdash}\Longleftrightarrow c=0_{\dashv}.$$ 
Such a bijection extends arbitrarily to a bijection of $D$, completing the proof.
\end{proof}

The following proposition was proved in~\cite{Ggdim1}.

\begin{proposition}\label{null_dm}
Let $(D, \dashv)$ be a null semigroup with zero $0$ and $(D, \vdash)$ be an arbitrary semigroup. An algebraic structure $(D,\dashv, \vdash)$ is a $g$-dimonoid if and only if $x\vdash y\vdash z = 0 \vdash z$ for all $x,y,z\in D$.
\end{proposition}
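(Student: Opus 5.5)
Since $(D,\dashv)$ is a null semigroup with zero $0$, every $\dashv$-product equals $0$. The plan is to write out the two axioms $(D_1)$ and $(D_3)$ in this special case and see what they reduce to. Associativity of $\dashv$ is automatic, and associativity of $\vdash$ is part of the hypothesis, so only these two axioms remain.

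Axiom $(D_1)$ reads $(x\dashv y)\dashv z = x\dashv(y\vdash z)$. Both sides are $\dashv$-products, so both equal $0$, and $(D_1)$ holds identically. Hence $(D_1)$ imposes no condition at all.

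Axiom $(D_3)$ reads $(x\dashv y)\vdash z = x\vdash(y\vdash z)$. Since $x\dashv y=0$, its left side is $0\vdash z$. By associativity of $\vdash$, its right side is the unambiguous triple product $x\vdash y\vdash z$. Thus $(D_3)$ holds for all $x,y,z\in D$ exactly when $x\vdash y\vdash z=0\vdash z$ for all $x,y,z\in D$.

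Combining the two steps gives both directions. If $(D,\dashv,\vdash)$ is a $g$-dimonoid, then $(D_3)$ yields the stated identity. Conversely, if the identity holds, then $(D_3)$ holds, $(D_1)$ always holds, and both operations are associative, so the structure is a $g$-dimonoid. There is no real obstacle here; the only point needing care is that associativity of $\vdash$ is assumed, so that the triple product $x\vdash y\vdash z$ is well defined.
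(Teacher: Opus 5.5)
Your argument is correct: since $\dashv$ is null, $(D_1)$ holds trivially because both sides are $\dashv$-products equal to $0$, and $(D_3)$ reduces exactly to $0\vdash z = x\vdash y\vdash z$. The paper does not reprove this proposition here (it quotes it from earlier work on $g$-dimonoids), and your direct axiom-by-axiom check is a complete proof.
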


\begin{proposition}\label{comm_nullmon_gdm}
Let $D$ be a set with $|D|\geq 3$, and let $a,c,0_{\vdash}\in D$ be pairwise distinct.
Let $(D, \dashv)$ be a null semigroup with zero $0_{\dashv}$, where $0_{\dashv}\neq a$, and define the binary operation $\vdash$ on $D$ by
$$
x\vdash y=
\begin{cases}
c,&\text{if }x=y=a\\
0_{\vdash},&\text{otherwise}.
\end{cases}
$$
Then $(D,\dashv,\vdash)$ is a $2$-dinilpotent commutative $g$-dimonoid. Moreover, it is a dimonoid if and only if $0_{\dashv}=0_{\vdash}$.
\end{proposition}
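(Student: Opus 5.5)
The plan is to follow the pattern of the proof of Proposition~\ref{comm_mon_gdm}, using Proposition~\ref{null_dm} to dispose of axioms $(D_1)$ and $(D_3)$ at once.

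First, commutativity. Both operations are commutative: $\dashv$ is constant, and the definition of $\vdash$ is symmetric in $x,y$.

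Next, associativity and nilpotency of $\vdash$. Every product $x\vdash y$ lies in $\{c,0_{\vdash}\}$, and neither element equals $a$. Hence $(x\vdash y)\vdash z=0_{\vdash}=x\vdash(y\vdash z)$ for all $x,y,z\in D$. So $(D,\vdash)$ is a semigroup in which every triple product equals $0_{\vdash}$. Clearly $0_{\vdash}$ is a zero of $(D,\vdash)$, since $0_{\vdash}\neq a$, and $(D,\vdash)$ is $2$-nilpotent. The null semigroup $(D,\dashv)$ is trivially $2$-nilpotent with zero $0_{\dashv}$.

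Then the $g$-dimonoid axioms. By Proposition~\ref{null_dm}, it suffices to check that $x\vdash y\vdash z=0_{\dashv}\vdash z$ for all $x,y,z$. The left side is $0_{\vdash}$ by the previous step. The right side equals $0_{\vdash}$ because $0_{\dashv}\neq a$. This gives a $2$-dinilpotent commutative $g$-dimonoid. As a fallback that avoids the cited result, one can verify $(D_1)$ and $(D_3)$ directly: both sides of $(D_1)$ equal $0_{\dashv}$, and both sides of $(D_3)$ equal $0_{\vdash}$, because $x\dashv y=0_{\dashv}\neq a$.

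Finally, axiom $(D_2)$. Its left side is $(x\vdash y)\dashv z=0_{\dashv}$. Its right side is $x\vdash(y\dashv z)=x\vdash 0_{\dashv}=0_{\vdash}$, again using $0_{\dashv}\neq a$. Therefore $(D_2)$ holds for all $x,y,z$ exactly when $0_{\dashv}=0_{\vdash}$.

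There is no real obstacle here. The only point needing care is to use the hypothesis $0_{\dashv}\neq a$ at each step where a product involving $0_{\dashv}$ must collapse to $0_{\vdash}$.
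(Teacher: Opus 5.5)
Your proposal is correct and follows essentially the same route as the paper: commutativity and associativity of $\vdash$ from the fact that no product equals $a$, then Proposition~\ref{null_dm} to obtain $(D_1)$ and $(D_3)$, and finally $(D_2)$ reducing to $0_{\dashv}=0_{\vdash}$ via $x\vdash 0_{\dashv}=0_{\vdash}$. Where the paper calls the null semigroup $1$-nilpotent, you call it $2$-nilpotent; both are consistent with the definition, since its nilpotency index is $1\le 2$.
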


\begin{proof}
The operation $\vdash$ is clearly associative and commutative. Since
$c\neq a$ and $0_{\dashv}\neq a$, we have
$$(x\vdash y)\vdash z = 0_{\vdash} = 0_{\dashv}\vdash z$$
for all $x,y,z\in D$. Hence, by Proposition~\ref{null_dm},
$(D,\dashv,\vdash)$ is a commutative $g$-dimonoid.

Moreover, $(D,\dashv)$ is $1$-nilpotent, and $(D,\vdash)$ is $2$-nilpotent since every triple product in $(D,\vdash)$ equals $0_{\vdash}$. Therefore, $(D,\dashv,\vdash)$ is a $2$-dinilpotent commutative $g$-dimonoid.

Finally, since $(D,\dashv)$ is a null semigroup,
$$(x\vdash y)\dashv z = 0_{\dashv}$$
for all $x,y,z\in D$. Moreover,
$$x\vdash(y\dashv z) = x\vdash0_{\dashv} = 0_{\vdash},$$
because $0_{\dashv}\neq a$. Therefore, axiom $(D_2)$ holds if and only if $0_{\dashv}=0_{\vdash}$. Consequently, $(D,\dashv,\vdash)$ is a dimonoid if and only if
$0_{\dashv}=0_{\vdash}$.
\end{proof}

The proof of the following proposition is analogous to that of Proposition~\ref{iso_comm_mon_gdm} and is therefore omitted.

\begin{proposition}\label{iso_comm_nullmon_gdm}
For each $i\in\{1,2\}$, let $D_i$ be a set with $|D_i|\ge3$, let
$a_i,c_i,0_{\vdash,i}\in D_i$ be pairwise distinct, let
$(D_i,\dashv_i)$ be a null semigroup with zero $0_{\dashv,i}\neq a_i$, and
define the binary operation $\vdash_i$ on $D_i$ by
$$
x\vdash_i y=
\begin{cases}
c_i,&\text{if }x=y=a_i\\
0_{\vdash,i},&\text{otherwise}.
\end{cases}
$$
Then $(D_1,\dashv_1,\vdash_1)\cong(D_2,\dashv_2,\vdash_2)$ if and only if there exists a bijection $\psi:D_1\to D_2$ such that
$$
\psi(a_1)=a_2,\quad
\psi(c_1)=c_2,\quad
\psi(0_{\dashv,1})=0_{\dashv,2},\quad
\psi(0_{\vdash,1})=0_{\vdash,2}.
$$
\end{proposition}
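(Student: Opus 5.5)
I would follow the proof of Proposition~\ref{iso_comm_mon_gdm} closely, proving the two implications separately.

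For necessity, suppose $\psi:D_1\to D_2$ is an isomorphism of $g$-dimonoids. Then $\psi$ is an isomorphism of the null semigroups $(D_1,\dashv_1)$ and $(D_2,\dashv_2)$, and also of the semigroups $(D_1,\vdash_1)$ and $(D_2,\vdash_2)$. A semigroup isomorphism carries the zero to the zero, which gives $\psi(0_{\dashv,1})=0_{\dashv,2}$ and $\psi(0_{\vdash,1})=0_{\vdash,2}$.

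Next, $a_1\vdash_1a_1=c_1\neq 0_{\vdash,1}$, so
$$\psi(a_1)\vdash_2\psi(a_1)=\psi(c_1)\neq 0_{\vdash,2}.$$
Injectivity gives the inequality, since $\psi(0_{\vdash,1})=0_{\vdash,2}$. In $(D_2,\vdash_2)$, the element $a_2$ is the unique $x$ with $x\vdash_2x\neq0_{\vdash,2}$. Hence $\psi(a_1)=a_2$, and therefore $\psi(c_1)=a_2\vdash_2a_2=c_2$.

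For sufficiency, take a bijection $\psi$ with the four prescribed values. I would check the two operations in turn.
\begin{itemize}
\item For $\dashv$: both operations are constant, so $\psi(x\dashv_1y)=\psi(0_{\dashv,1})=0_{\dashv,2}=\psi(x)\dashv_2\psi(y)$.
\item For $\vdash$ at $(x,y)=(a_1,a_1)$: $\psi(x\vdash_1 y)=\psi(c_1)=c_2=a_2\vdash_2a_2=\psi(x)\vdash_2\psi(y)$.
\item For $\vdash$ at any other pair $(x,y)$: since $\psi$ is a bijection with $\psi(a_1)=a_2$, we have $(\psi(x),\psi(y))\neq(a_2,a_2)$. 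Both sides then equal $0_{\vdash,2}$.
\end{itemize}
So $\psi$ preserves both operations and is an isomorphism.

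None of these steps is hard. The one point needing care is the characterisation of $a_2$ as the unique element whose $\vdash$-square is nonzero. This rests on $c_2\neq0_{\vdash,2}$, which holds because $a_2,c_2,0_{\vdash,2}$ are pairwise distinct. The hypothesis $0_{\dashv,i}\neq a_i$ is not needed for the isomorphism argument; it only guarantees that the structures are $g$-dimonoids by Proposition~\ref{comm_nullmon_gdm}.
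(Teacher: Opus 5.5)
Your proof is correct and follows the paper's intended argument: the paper omits this proof as analogous to Proposition~\ref{iso_comm_mon_gdm}, and you have adapted that proof faithfully. One point you make explicit that the model proof leaves implicit is the use of injectivity to get $(\psi(x),\psi(y))\neq(a_2,a_2)$ in the sufficiency direction.
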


\section{Commutative $g$-dimonoids of order 3}

In this section, we focus on describing, up to isomorphism, all commutative $g$-dimonoids of order $3$.

\smallskip

Among the $19683$ possible binary operations on a three-element set $S$, precisely $113$ are associative. In other words, there exist exactly $113$ distinct three-element semigroups. However, many of these semigroups are isomorphic, and as a result, there are essentially only $24$ pairwise nonisomorphic semigroups of order $3$, see \cite{GR1, G9}.
Among these $24$ pairwise nonisomorphic semigroups of order $3$, there are $12$ commutative semigroups. 

Lists of all pairwise nonisomorphic commutative semigroups of order $3$ and their automorphism groups are presented in Table~\ref{tab:auts3} taken from~\cite{G9}. In this table, the notation $S^{+1}$ denotes a monoid obtained from $S$ by adjoining the extra identity $1$ (whether or not $S$ is a monoid), and $S^{+0}$ denotes a semigroup obtained from $S$ by adjoining the extra zero $0$ (regardless of whether $S$ has a zero). The notation $M^{\tilde{1}}$ stands for the semigroup obtained from a monoid $M$ with  identity $1$ by adjoining an extra element $\tilde{1}$ by putting $\tilde{1}* m=m* \tilde{1}=m$ for all $m\in M$ and ${\tilde{1}}*{\tilde{1}}=1$. The notation $\M_{r,m}$ refers to a finite monogenic semigroup of index $r$ and period $m$, $L_n$ denotes the  linear semilattice $\{0, 1,\ldots, n\!-\!1\}$ of order $n$ endowed with the operation of minimum,  $C_n$ stands for the cyclic group of $n$-th roots of $1$, and $O_n^m$ denotes the semigroup of order $n$ with zero in which exactly $m$ nonzero elements are idempotents, and all other products are equal to zero. More details about these semigroups can be found in \cite{GR1}.

\begin{table}[H]
\centering
\resizebox{12cm}{!}{
\begin{tabular}{|c|c|c|c|c|c|c|c|c|c|c|c|c|}
        \hline
        $S$ & $C_3$ & $O_3$  & $\M_{2,2}$ & $C_2^{+1}$ &  $C_2^{\tilde{1}}$ & $\M_{3,1}$&  $O_2^{+1}$ & $O_2^{+0}$ & $L_3$ & $C_2^{+0}$    & $O_3^2$  & $O_3^1$ \\
        \hline
        $\Aut(S)$ &  $C_2$ & $C_2$ & $C_1$ & $C_1$ & $C_{1}$  & $C_{1}$ & $C_{1}$ & $C_{1}$ & $C_{1}$ & $C_{1}$ & $C_{2}$ & $C_{1}$  \\
        \hline
\end{tabular}
}
\smallskip
\caption{Nonisomorphic commutative $3$-element semigroups and their automorphism groups}\label{tab:auts3}
\end{table}

The following theorem,  established in~\cite{Gdim2}, provides a complete classification of all pairwise nonisomorphic commutative dimonoids of order~$3$.

\begin{theorem}\label{num_comm_dm}
Up to isomorphism, there exist $14$ three-element commutative dimonoids: $12$ trivial dimonoids and a pair of mutually dual nontrivial dimonoids, namely $\M_{3,1} \rbag O_3$ and $O_3 \rbag \M_{3,1}$.
\end{theorem}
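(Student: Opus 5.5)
Since Theorem~\ref{num_comm_dm} is cited from~\cite{Gdim2}, the plan is to re-derive it directly. We classify all triples $(D,\dashv,\vdash)$ with $|D|=3$ in which both operations are commutative and satisfy $(D_1)$, $(D_2)$, $(D_3)$. The starting observation is that in a commutative dimonoid the axioms become very rigid. Using commutativity, $(D_1)$ gives $x\dashv(y\vdash z)=(x\dashv y)\dashv z$. Comparing with $(D_2)$ rewritten as $z\dashv(x\vdash y)=(y\dashv z)\vdash x$, and permuting variables, we get identities of the form $(x\dashv y)\dashv z=(x\vdash y)\dashv z$ and $(x\dashv y)\vdash z=(x\vdash y)\vdash z$. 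More precisely, all mixed triple products coincide: $x\dashv y\dashv z = x\vdash y\vdash z$ on triples. So both semigroups share the same "cube map" $T(x,y,z)$, and $\dashv,\vdash$ can differ only on pairs.

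The first step is to fix the semigroup $(D,\dashv)$ as one of the $12$ commutative semigroups in Table~\ref{tab:auts3}. For each, we determine the commutative semigroups $(D,\vdash)$ satisfying the axioms. The trivial choice $\vdash=\dashv$ always works and yields the $12$ trivial dimonoids, which are pairwise nonisomorphic because their $\dashv$-reducts are.

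For a nontrivial $\vdash$, there exist $x,y$ with $x\dashv y\ne x\vdash y$. The identity $(x\dashv y)\ast z=(x\vdash y)\ast z$ then says that the distinct elements $u=x\dashv y$ and $v=x\vdash y$ act identically under both operations. I would then argue case by case:
\begin{itemize}
\item If $(D,\dashv)$ has an identity, or is a group, or has an idempotent acting as an identity on the image $D\dashv D$, cancellation-type arguments force $u=v$. This disposes of $C_3$, $C_2^{+1}$, $C_2^{\tilde 1}$, $O_2^{+1}$, $L_3$, $C_2^{+0}$, $O_3^2$, $O_3^1$ and $\M_{2,2}$. The key point in each case is that the translations $z\mapsto u\dashv z$ separate elements of $D^2$.
\item The remaining candidates are the nilpotent ones: $O_3$, $\M_{3,1}$ and $O_2^{+0}$. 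For these, every triple product is $0$, or lies in a tiny set, so the coincidence of triple products is automatic. We then enumerate the commutative nilpotent $\vdash$ directly.
\end{itemize}
For $(D,\dashv)=O_3$, Proposition~\ref{null_dm} gives the condition $x\vdash y\vdash z=0\vdash z$, and $(D_2)$ forces the zeros to agree. This should leave only $\vdash\cong \M_{3,1}$ with $a\vdash a=c$ and all other products equal to $0$, in accordance with Proposition~\ref{comm_nullmon_gdm}. Dually, for $(D,\dashv)=\M_{3,1}$ we get $\vdash=O_3$. For $O_2^{+0}$, we check that no nontrivial option survives $(D_2)$.

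Finally, we identify the isomorphism classes. By Proposition~\ref{iso_comm_nullmon_gdm}, all the nontrivial structures found with $\dashv$ null are isomorphic to $O_3\rbag \M_{3,1}$: any bijection fixing $0$ and sending $a\mapsto a$, $c\mapsto c$ works, and with three elements such a bijection is unique. The two nontrivial structures are mutually dual, and they are not isomorphic to each other because their $\dashv$-reducts differ.

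The main obstacle is the exhaustive elimination in the non-nilpotent cases. Rather than a slick argument, I would first try the coincidence-of-action identity on $u,v$ for each semigroup in the table, resorting to a short table check where needed.
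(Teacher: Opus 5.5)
The paper does not prove this theorem (it is quoted from earlier work), so your argument has to stand on its own. Several parts are correct:
your coincidence identity (all eight three-fold products equal), the monoid cases ($u=u\dashv e=v\dashv e=v$), and $O_3^2$.

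The gap is in the elimination for $C_2^{\tilde 1}$, $\M_{2,2}$ and $O_3^1$. There you only use two facts: $u=x\dashv y$ and $v=x\vdash y$ have the same $\dashv$- and $\vdash$-translations, and $\dashv$-translations separate the elements of $D\dashv D$. But $v$ lies in $D\vdash D$, not necessarily in $D\dashv D$. Already in $O_3\rbag\M_{3,1}$ one has $v=c\notin D\dashv D=\{0\}$. These three semigroups are inflations, so each element outside $D\dashv D$ has the same row as its image under the retraction. Likewise, an idempotent $e$ acting as identity on $D\dashv D$ only yields $u=v\dashv e$, not $u=v$.

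Worse, the property you use follows from $(D_1)$, $(D_3)$ and commutativity alone:
$(x\vdash y)\dashv z=z\dashv(x\vdash y)=(z\dashv x)\dashv y=(x\dashv y)\dashv z$ and $(x\dashv y)\vdash z=x\vdash(y\vdash z)=(x\vdash y)\vdash z$. Hence it holds in the paper's commutative $g$-dimonoids \#3, \#4, \#5. These live on exactly these three semigroups and have $u\neq v$: in \#3, $0\dashv 0=0$ and $0\vdash 0=1$, and $0,1$ have identical rows in both tables. Since those structures are not dimonoids, no argument resting only on this property can exclude these cases.

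The repair uses the part of your identity that genuinely needs $(D_2)$: $(x\dashv y)\vdash z=(x\dashv y)\dashv z$ and $(x\vdash y)\dashv z=(x\vdash y)\vdash z$. That is, $w\dashv z=w\vdash z$ for all $w\in(D\dashv D)\cup(D\vdash D)$ and all $z$, which is precisely what fails in \#3--\#5.
\begin{itemize}
\item If $(D\dashv D)\cup(D\vdash D)=D$, this gives $\vdash=\dashv$. That disposes at once of the six semigroups with $D\dashv D=D$.
\item The five semigroups with $|D\dashv D|=2$ are $C_2^{\tilde 1}$, $\M_{2,2}$, $O_3^1$, $O_2^{+0}$ and $\M_{3,1}$. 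A nontrivial $\vdash$ requires the unique $a\notin D\dashv D$ to lie outside $D\vdash D$. By commutativity every product except $a\vdash a\in D\dashv D$ is then forced, leaving one nontrivial candidate.
\item That candidate violates associativity of $\vdash$ except for $\M_{3,1}$, where it is the null operation.
\end{itemize}

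Two smaller points. First, $O_2^{+0}$ is not nilpotent, since the zero of $O_2$ becomes a nonzero idempotent; your direct check would still cover it. Second, ``dually'' does not settle $(D,\dashv)\cong\M_{3,1}$. Dualizing your $O_3$ analysis classifies the structures whose $\vdash$ is null, not those whose $\dashv$ is $\M_{3,1}$. The argument above handles that case.
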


In~\cite{Ggdim1}, computer-assisted calculations established that there are $22$ pairwise nonisomorphic commutative $g$-dimonoids of order $3$. In view of Theorem~\ref{num_comm_dm}, it follows that exactly $8$ pairwise nonisomorphic commutative $g$-dimonoids of order $3$ are not dimonoids. In what follows, we construct all eight of these $g$-dimonoids as an application of the results established in Sections~\ref{inf_gdm} and~\ref{dcgdm}.

\bigskip

The first two commutative $g$-dimonoids are described using the following theorem, established in~\cite{Ggdim1}.

\begin{theorem}\label{iso-dual_O}
Let $S$ be a set, and $0\in S$, $z\in S$. An algebraic structure $O_{S^0} \rbag O_{S^z} = (S, \dashv, \vdash)$, where $(S, \dashv)$ and $(S, \vdash)$ are null semigroups with zeros 
$0$ and $z$, respectively, is an iso-dual commutative $g$-dimonoid. Moreover, if $0\ne z$, the $g$-dimonoid $O_{S^0} \rbag O_{S^z}$ is not a dimonoid.
\end{theorem}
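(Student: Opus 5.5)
The plan is to check associativity and commutativity first, then the axioms $(D_1)$ and $(D_3)$, then build an explicit isomorphism onto the dual, and finally test $(D_2)$.

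\emph{Associativity, commutativity, and the axioms.} Both component semigroups are null semigroups, so each is associative and commutative, and $(S,\dashv,\vdash)$ is commutative. For the $g$-dimonoid property, Proposition~\ref{null_dm} applies: $(S,\dashv)$ is null with zero $0$, and $(S,\vdash)$ is null with zero $z$. Hence $x\vdash y\vdash z' = z = 0\vdash z'$ for all $x,y,z'\in S$, and $(S,\dashv,\vdash)$ is a $g$-dimonoid. One can also check directly that both sides of $(D_1)$ equal $0$ and both sides of $(D_3)$ equal $z$.

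\emph{Iso-duality.} Since both operations are commutative, the dual has $x\dashv^d y = y\vdash x = z$ and $x\vdash^d y = y\dashv x = 0$. So $(S,\dashv,\vdash)^d = O_{S^z}\rbag O_{S^0}$. I will take a bijection $\psi:S\to S$ with $\psi(0)=z$ and $\psi(z)=0$, fixing every other point. If $0=z$, take $\psi=\id_S$.

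Then $\psi(x\dashv y)=\psi(0)=z=\psi(x)\dashv^d\psi(y)$. Likewise $\psi(x\vdash y)=\psi(z)=0=\psi(x)\vdash^d\psi(y)$. So $\psi$ is an isomorphism onto the dual, and the structure is iso-dual.

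\emph{Failure of $(D_2)$.} Here $(x\vdash y)\dashv w = 0$, while $x\vdash(y\dashv w)=z$. So $(D_2)$ fails whenever $0\neq z$ (note $S$ is nonempty since $0\in S$), and the structure is not a dimonoid. No step is a real obstacle. The only care needed is the degenerate case $0=z$ in defining the swap $\psi$.
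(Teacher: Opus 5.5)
Your proof is correct and complete. Every check reduces to a one-line computation with constant operations:
\begin{itemize}
\item both sides of $(D_1)$ equal $0$, and both sides of $(D_3)$ equal $z$;
\item the transposition of $0$ and $z$ (the identity when $0=z$) carries $\dashv$ to $\dashv^d$ and $\vdash$ to $\vdash^d$;
\item $(x\vdash y)\dashv w=0\neq z=x\vdash(y\dashv w)$ whenever $0\neq z$.
\end{itemize}

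The paper does not prove this statement; it quotes it from \cite{Ggdim1}. Within this paper, however, the result is a special case of Theorem~\ref{thm:inflation_g_dimonoid}, which gives a different route.

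The null semigroup $O_{S^0}$ is an inflation of the one-element subsemigroup $T=\{0\}$ with the constant retraction $r\equiv 0$. Every singleton $\{p_0\}\subseteq S$ is then a transversal. Choosing $p_0=z$ gives $x\vdash y=p_{r(x)r(y)}=z$, which is exactly $O_{S^0}\rbag O_{S^z}$.

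Theorem~\ref{thm:inflation_g_dimonoid} then yields three things at once. It gives the $g$-dimonoid axioms. It gives iso-duality, since $(S,\dashv)$ is commutative. And because $T^3=\{0\}$, it gives the criterion that the structure is a dimonoid if and only if $p_0=0$, i.e.\ if and only if $z=0$. Moreover, the involution $\psi$ of Proposition~\ref{prop:new_inflation}, with $\psi(0)=p_0$, $\psi(p_0)=0$ and $\psi$ the identity elsewhere, is precisely your transposition.

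Your direct computation is more elementary and needs no machinery. The specialization explains where the swap of zeros comes from, and it also gives the converse ($z=0$ implies a trivial dimonoid) at no extra cost. Commutativity is immediate in either approach, since both components are null semigroups.
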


\noindent{\bf Commutative $g$-dimonoid \#1.} Let $S=\{0,1,2\}$. According to  Theorem~\ref{iso-dual_O}, an algebraic structure $O_{S^0} \rbag O_{S^1} = (S, \dashv, \vdash)$, where $(S, \dashv)$ and $(S, \vdash)$ are null semigroups with zeros $0$ and $1$, respectively, is an iso-dual commutative $g$-dimonoid of order $3$, which is not a dimonoid.

\bigskip

\noindent{\bf Commutative $g$-dimonoid \#2.} Let $S=\{0,1,2\}$, and let $D=\{1,2\}$. If $(D,\dashv, \vdash)$ is a  $g$-dimonoid, then the algebraic structure $(D,\dashv, \vdash)^{+0}$  obtained from $D$ by adjoining an extra zero $0\notin D$, defined by 
$$0\dashv d=d\dashv 0=0=0\vdash d=d\vdash 0\quad\text{ for all }d\in D\cup \{0\},$$
 is a $g$-dimonoid as well, see~\cite{Ggdim1}.  By Theorem~\ref{iso-dual_O}, the algebraic structure $O_{D^1}\rbag O_{D^2}=(D,\dashv,\vdash)$, where $(D,\dashv)$ and
$(D,\vdash)$ are null semigroups with zeros $1$ and $2$, respectively, is an iso-dual commutative $g$-dimonoid of order $2$ that is not a dimonoid. Consequently, adjoining the extra zero $0$ yields the three-element algebraic structure $(O_{D^1}\rbag O_{D^2})^{+0}$, which is also an iso-dual commutative $g$-dimonoid but not a dimonoid.

\bigskip

The next three $g$-dimonoids are obtained by applying the inflation construction developed in Section~\ref{inf_gdm}.

\medskip

\noindent{\bf Commutative $g$-dimonoid \#3.} Let $S=\{0,1,2\}$ be the semigroup given by the following Cayley table:
$$
\begin{array}{c|ccc}
\cdot & 0 & 1 & 2\\ \hline
0 & 0 & 0 & 2\\
1 & 0 & 0 & 2\\
2 & 2 & 2 & 0
\end{array}
$$
which is an inflation of the cyclic subgroup $T=\{0,2\}$ with retraction
$$r(0)=r(1)=0,\qquad r(2)=2.$$
The semigroup $(S, \cdot)$ is isomorphic to  $C_2^{\tilde{1}}$ from Table~\ref{tab:auts3}. 

Choose the transversal $P=\{1,2\}$, that is, $p_0=1$ and $p_2=2$.
By Theorem~\ref{thm:inflation_g_dimonoid}, the operations
$$
x\dashv y:=xy,\qquad
x\vdash y:=p_{r(x)r(y)}
$$
yield the $g$-dimonoid given by the following Cayley tables:
$$
\begin{array}{c|ccc}
\dashv & 0 & 1 & 2\\ \hline
0 & 0 & 0 & 2\\
1 & 0 & 0 & 2\\
2 & 2 & 2 & 0
\end{array}
\qquad\qquad
\begin{array}{c|ccc}
\vdash & 0 & 1 & 2\\ \hline
0 & 1 & 1 & 2\\
1 & 1 & 1 & 2\\
2 & 2 & 2 & 1
\end{array}
$$

Since $T^3=T$ and $p_0\neq 0$, Theorem~\ref{thm:inflation_g_dimonoid} implies that this $g$-dimonoid is not a dimonoid. On the other hand, as $(S,\dashv)$ is commutative, the same theorem shows that it is iso-dual.

Consider also the canonical transversal $Q=\{0,2\}$, where $q_0=0$ and $q_2=2$. By Theorem~\ref{thm:inflation_g_dimonoid}, this transversal yields the trivial dimonoid $S$. The semigroup $(S,\cdot)$ has only the identity automorphism (see~Table~\ref{tab:auts3}), which clearly commutes with the retraction $r$. Therefore, we may apply Corollary~\ref{cor:isomorphic_transversals}. 
However, the identity automorphism $\psi$ does not satisfy the condition.
Indeed, 
$$
\psi(p_0)= p_0 = 1\neq 0 = q_0=q_{\psi(0)}.
$$

Hence, by Corollary~\ref{cor:isomorphic_transversals}, the $g$-dimonoids determined by the transversals $P=\{1,2\}$ and $Q=\{0,2\}$ are not isomorphic.

\bigskip

\noindent{\bf Commutative $g$-dimonoid \#4.} Let $S=\{0,1,2\}$ be the semigroup given by the following Cayley table:
$$
\begin{array}{c|ccc}
\cdot & 0 & 1 & 2\\ \hline
0 & 0 & 1 & 1\\
1 & 1 & 0 & 0\\
2 & 1 & 0 & 0
\end{array}
$$
which is an inflation of the cyclic subgroup $T=\{0,1\}$ with retraction
$$r(0)=0,\qquad r(1)=r(2)=1.$$ 
The semigroup $(S, \cdot)$ is monogenic of index 2 and period 2 generated by the element 2.

Choose the transversal $P=\{0,2\}$, that is, $p_0=0$ and $p_1=2$.
By Theorem~\ref{thm:inflation_g_dimonoid}, the operations
$$
x\dashv y:=xy,\qquad
x\vdash y:=p_{r(x)r(y)}
$$
yield the $g$-dimonoid given by the following Cayley tables:
$$
\begin{array}{c|ccc}
\dashv & 0 & 1 & 2\\ \hline
0 & 0 & 1 & 1\\
1 & 1 & 0 & 0\\
2 & 1 & 0 & 0
\end{array}
\qquad\qquad
\begin{array}{c|ccc}
\vdash & 0 & 1 & 2\\ \hline
0 & 0 & 2 & 2\\
1 & 2 & 0 & 0\\
2 & 2 & 0 & 0
\end{array}
$$

Since $T^3=T$ and $p_1\neq 1$, Theorem~\ref{thm:inflation_g_dimonoid} implies that this $g$-dimonoid is not a dimonoid. On the other hand, as $(S,\dashv)$ is commutative, the same theorem shows that it is iso-dual.

Consider also the canonical transversal $Q=\{0,1\}$, where $q_0=0$ and $q_1=1$.  By Theorem~\ref{thm:inflation_g_dimonoid}, this transversal yields the trivial dimonoid $S$.
The semigroup $(S,\cdot)$ has only the identity automorphism (see~Table~\ref{tab:auts3}), which clearly commutes with the retraction $r$. Therefore, we may apply Corollary~\ref{cor:isomorphic_transversals}.
However, the identity automorphism $\psi$ does not satisfy the condition.
Indeed, 
$$
\psi(p_1)= p_1 = 2\neq 1 =q_1 =q_{\psi(1)}.
$$

Hence, by Corollary~\ref{cor:isomorphic_transversals}, the $g$-dimonoids determined by the transversals $P=\{0,2\}$ and $Q=\{0,1\}$ are not isomorphic.

\bigskip

\noindent{\bf Commutative $g$-dimonoid \#5.} Let $S=\{0,1,2\}$ be the semigroup given by the following Cayley table:
\[
\begin{array}{c|ccc}
\cdot & 0 & 1 & 2\\ \hline
0 & 0 & 0 & 0\\
1 & 0 & 0 & 0\\
2 & 0 & 0 & 2
\end{array}
\]
which is an inflation of the subsemigroup $T=\{0,2\}$ with retraction
$$r(0)=r(1)=0,\qquad r(2)=2.$$
The semigroup $(S, \cdot)$ is isomorphic to  $O_3^1$ from Table~\ref{tab:auts3}.

Choose the transversal $P=\{1,2\}$, that is, $p_0=1$ and $p_2=2$.
By Theorem~\ref{thm:inflation_g_dimonoid}, the operations
$$
x\dashv y:=xy,\qquad
x\vdash y:=p_{r(x)r(y)}
$$
yield the $g$-dimonoid given by the following Cayley tables:
\[
\begin{array}{c|ccc}
\dashv & 0 & 1 & 2\\ \hline
0 & 0 & 0 & 0\\
1 & 0 & 0 & 0\\
2 & 0 & 0 & 2
\end{array}
\qquad\qquad
\begin{array}{c|ccc}
\vdash & 0 & 1 & 2\\ \hline
0 & 1 & 1 & 1\\
1 & 1 & 1 & 1\\
2 & 1 & 1 & 2
\end{array}
\]

Since $T^3=T$ and $p_0\neq0$, Theorem~\ref{thm:inflation_g_dimonoid} implies that this $g$-dimonoid is not a dimonoid. On the other hand, as $(S,\dashv)$ is commutative, the same theorem shows that it is iso-dual.

Consider also the canonical transversal $Q=\{0,2\}$, where $q_0=0$ and $q_2=2$. By Theorem~\ref{thm:inflation_g_dimonoid}, this transversal yields the trivial dimonoid $S$. The semigroup $(S,\cdot)$ has only the identity automorphism $\psi$ (see~Table~\ref{tab:auts3}), which clearly commutes with the retraction $r$. Therefore, we may apply Corollary~\ref{cor:isomorphic_transversals}.
However,
$$
\psi(p_0)=p_0=1\neq0=q_0=q_{\psi(0)}.
$$
Hence the condition of Corollary~\ref{cor:isomorphic_transversals} is not satisfied. Consequently, the $g$-dimonoids determined by the
transversals $P=\{1,2\}$ and $Q=\{0,2\}$ are not isomorphic.

\bigskip

The remaining three $g$-dimonoids are obtained by applying the constructions developed in Section~\ref{dcgdm}.

\medskip

\noindent{\bf Commutative $g$-dimonoid \#6.} Let $S=\{0,1,2\}$ be the semigroup given by the following Cayley table:
$$
\begin{array}{c|ccc}
\dashv & 0 & 1 & 2\\ \hline
0 & 0 & 0 & 0\\
1 & 0 & 0 & 0\\
2 & 0 & 0 & 1
\end{array}
$$
The semigroup $(S, \dashv)$ is monogenic of index 3 and period 1 generated by the element 2.

The multiplication of $S$ is of the form described in Proposition~\ref{comm_mon_gdm} with $a=2$, $b=1$, and $0_{\dashv}=0$.

Since $a,c,0_{\vdash}$ must be pairwise distinct, there are exactly two possibilities:
$$
(c,0_{\vdash})=(1,0)
\quad\text{or}\quad
(c,0_{\vdash})=(0,1).
$$

By Proposition~\ref{comm_mon_gdm}, the first choice yields the trivial commutative dimonoid $(S,\dashv,\dashv)$, whereas the second choice yields a $2$-dinilpotent commutative $g$-dimonoid, which fails to be a dimonoid by Proposition~\ref{comm_mon_gdm}, since $0_{\dashv}=0\neq 1 =0_{\vdash}$. The latter is given by the following Cayley tables:
$$
\begin{array}{c|ccc}
\dashv & 0 & 1 & 2\\ \hline
0 & 0 & 0 & 0\\
1 & 0 & 0 & 0\\
2 & 0 & 0 & 1
\end{array}
\qquad\qquad
\begin{array}{c|ccc}
\vdash & 0 & 1 & 2\\ \hline
0 & 1 & 1 & 1\\
1 & 1 & 1 & 1\\
2 & 1 & 1 & 0
\end{array}
$$

The above $g$-dimonoid is iso-dual by Corollary~\ref{isodual_comm_mon_gdm}, since
$$
b=1=0_{\vdash}
\quad\Longleftrightarrow\quad
c=0=0_{\dashv}.
$$

\bigskip

\noindent{\bf Commutative $g$-dimonoids \#7 and \#8.} Let $S=\{0,1,2\}$ be the null semigroup with zero $0$.
We construct commutative $g$-dimonoids using Proposition~\ref{comm_nullmon_gdm}. 

Since $a \neq 0_{\dashv}=0$, there are two possible choices for the distinguished element $a$, namely $a=2$ or $a=1$.

We first consider the case $a=2$.

Since $a,c,0_{\vdash}$ must be pairwise distinct, there are exactly two
possibilities:
$$
(c,0_{\vdash})=(1,0)
\quad\text{or}\quad
(c,0_{\vdash})=(0,1).
$$

The first choice yields the commutative dimonoid isomorphic to $O_3 \rbag \M_{3,1}$, whereas by Proposition~\ref{comm_nullmon_gdm} the second choice yields a $2$-dinil\-po\-tent commutative $g$-dimonoid given by the following Cayley tables:
$$
\begin{array}{c|ccc}
\dashv & 0 & 1 & 2\\ \hline
0 & 0 & 0 & 0\\
1 & 0 & 0 & 0\\
2 & 0 & 0 & 0
\end{array}
\qquad\qquad
\begin{array}{c|ccc}
\vdash & 0 & 1 & 2\\ \hline
0 & 1 & 1 & 1\\
1 & 1 & 1 & 1\\
2 & 1 & 1 & 0
\end{array}
$$

Its dual $g$-dimonoid has Cayley tables
$$
\begin{array}{c|ccc}
\dashv^d & 0 & 1 & 2\\ \hline
0 & 1 & 1 & 1\\
1 & 1 & 1 & 1\\
2 & 1 & 1 & 0
\end{array}
\qquad\qquad
\begin{array}{c|ccc}
\vdash^d & 0 & 1 & 2\\ \hline
0 & 0 & 0 & 0\\
1 & 0 & 0 & 0\\
2 & 0 & 0 & 0
\end{array}
$$

Thus, we obtain a pair of dual commutative $g$-dimonoids that are not isomorphic. Moreover, by Proposition~\ref{comm_nullmon_gdm}, neither of them is a dimonoid.

Now consider the second possibility $a=1$. Since $a,c,0_{\vdash}$ must be pairwise distinct, we obtain the commutative dimonoid isomorphic to $O_3 \rbag \M_{3,1}$ and the $2$-dinilpotent commutative $g$-dimonoid given by the following Cayley tables: 
$$
\begin{array}{c|ccc}
\dashv & 0 & 1 & 2\\ \hline
0 & 0 & 0 & 0\\
1 & 0 & 0 & 0\\
2 & 0 & 0 & 0
\end{array}
\qquad\qquad
\begin{array}{c|ccc}
\vdash & 0 & 1 & 2\\ \hline
0 & 2 & 2 & 2\\
1 & 2 & 0 & 2\\
2 & 2 & 2 & 2
\end{array}
$$

By Proposition~\ref{iso_comm_nullmon_gdm}, the latter $g$-dimonoid is isomorphic, via the transposition $\psi=(1\,2)$, to the commutative $g$-dimonoid, which is not a dimonoid, obtained above for $a=2$. Consequently, the two choices of $a$ determine the same isomorphism class of commutative $g$-dimonoids that are not dimonoids.

\end{document}